\newtheorem{theorem}{Theorem}[section]
\newtheorem{lemma}[theorem]{Lemma}
\newtheorem{proposition}[theorem]{Proposition}
\newtheorem{corollary}[theorem]{Corollary}
\theoremstyle{definition}
\newtheorem{definition}[theorem]{Definition}
\theoremstyle{definition}
\numberwithin{equation}{section}
\title[$\lambda$-perfect maps]{$\lambda$-perfect maps}
\author[M. Namdari \and M.A. Siavoshi]{M. Namdari* \and M.A. Siavoshi**}
\address{\llap{*\,}Department of Mathematics, Shahid Chamran University, Ahvaz, Iran}
\email{namdari@ipm.ir}
\address{\llap{**\,}Department of Mathematics, Shahid Chamran University, Ahvaz, Iran}
\email{m.siavoshi@scu.ac.ir}
\subjclass[2010]{Primary: 54A25, 54C30; Secondary: 13J30, 13J25}
\keywords{$\lambda$-compact, $\lambda$-perfect, $P_\lambda$-space}
\begin{document}


\maketitle

\begin{abstract}
The $\lambda$-perfect maps, a generalization of perfect maps (continuous closed maps with compact fibers)  are presented. Using $P_\lambda$-spaces and  the concept of $\lambda$-compactness  some results regarding $\lambda$-perfect maps will be investigated.

\end{abstract}

\section{Introduction}
\baselineskip=0.6cm

A perfect map  is a kind of continuous function between
topological spaces. Perfect maps are weaker than homeomorphisms, but strong enough to preserve some
topological properties such as local compactness that are not always preserved by continuous maps.
Let $X, Y$ be two topological spaces such that $X$ is Hausdorff. A continuous map $f:X\to Y$ is said to be a perfect map provided that $f$ is closed,  surjective and each fiber $f^{-1}(y)$ is compact in $X$. In \cite{KNS} the authors are introduced  a generalization of compactness, i.e., $\lambda$-compactness. Motivated by this concept we are led to generalize the concept of perfect mapping.  In the current section we recall some preliminary definitions and related results such as $\lambda$-compact spaces and $P_\lambda$-spaces  which are presented in \cite{KNS}. In Section 2, we will introduce $\lambda$-perfect maps and generalize some classical results which are related to the perfect maps and finally we prove that if the composition of two continuous mappings is $\lambda$-perfect, then its components are $\mu$-compact and $\beta$-compact, where $\mu, \beta\leq\lambda$.

\begin{definition}\label{def 0-1}
  A topological space $X$ (not necessarily Hausdorff) is said to be $\lambda$-compact whenever each open cover of $X$ can be reduced to an open cover of $X$ whose cardinality is less than $\lambda$, where $\lambda$ is the least infinite cardinal number with this property.  $\lambda$ is called the compactness degree of $X$ and we write $d_c(X)=\lambda$. 
\end{definition} 

We note that compact spaces, Lindel\"{o}f non-compact spaces are $\aleph_\circ$-compact, $\aleph_1$-compact spaces, respectively and in general every topological space $X$ is $\lambda$-compact for some infinite cardinal number $\lambda$, see the concept of  Lindel\"{o}f number in \cite[p  193]{Engelking}. 

The following useful  Lemma is somehow a generalization of the fact that continuous function $f:X\to Y$ takes compact sets in $X$ to compact sets in $Y$.

\begin{lemma}\label{lem 0-2}
Let $X, Y$ be two topological spaces. If $f:X\to y$ is a continuous function and $A\subseteq X$ is $\lambda$-compact, then $d_c(f(A))\leq\lambda$.
\end{lemma}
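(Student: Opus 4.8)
The plan is to mimic the classical argument that a continuous image of a compact set is compact, but to track the \emph{cardinality} of subcovers rather than merely their finiteness. Throughout I would replace $f$ by the restricted corestriction $f|_A \colon A \to f(A)$, which is continuous for the subspace topologies inherited from $X$ and $Y$; this keeps every open cover in the space where I actually need it.

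First I would fix an arbitrary open cover $\{V_i : i \in I\}$ of $f(A)$, each $V_i$ being open in the subspace $f(A) \subseteq Y$. By continuity of $f|_A$, each preimage $(f|_A)^{-1}(V_i)$ is open in $A$, and since the $V_i$ cover $f(A)$ these preimages cover $A$. Thus the given cover of $f(A)$ pulls back to an open cover of $A$ indexed by the same set $I$.

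Next I would invoke the hypothesis that $A$ is $\lambda$-compact. By Definition \ref{def 0-1}, this open cover of $A$ reduces to a subcover $\{(f|_A)^{-1}(V_i) : i \in J\}$ with $|J| < \lambda$. I would then verify that the corresponding family $\{V_i : i \in J\}$ covers $f(A)$: any $y \in f(A)$ equals $f(a)$ for some $a \in A$, and $a$ lies in some $(f|_A)^{-1}(V_i)$ with $i \in J$, whence $y \in V_i$. Hence every open cover of $f(A)$ admits a subcover of cardinality strictly less than $\lambda$.

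Finally I would translate this into the language of the compactness degree. The previous step shows that $\lambda$ itself possesses the defining reduction property for $f(A)$, so the \emph{least} infinite cardinal enjoying that property—namely $d_c(f(A))$—cannot exceed $\lambda$, giving $d_c(f(A)) \le \lambda$. The only point that demands attention is the bookkeeping between the subspace topologies on $A$ and on $f(A)$; once the cover of $f(A)$ is pulled back correctly, the remainder is a direct cardinality count and no real obstacle arises. (In particular one need not show that $f(A)$ attains the degree $\lambda$, only that it does not surpass it.)
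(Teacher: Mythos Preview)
Your argument is correct: pulling back an open cover of $f(A)$ along the continuous restriction $f|_A$, extracting a subcover of cardinality $<\lambda$ by $\lambda$-compactness of $A$, and pushing forward is exactly the expected adaptation of the classical proof, and your final step correctly interprets $d_c(f(A))\leq\lambda$ as ``$\lambda$ has the reduction property, so the least such cardinal is at most $\lambda$.'' The paper itself does not supply a proof of this lemma---it is recorded in the preliminaries as a result carried over from \cite{KNS}---so there is nothing to compare against; your write-up would serve perfectly well as the omitted justification.
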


\begin{proposition}\label{pro 0-3}
Let $F$ be a closed subset of topological space $X$, then $d_c(F)\leq d_c(X)$.
\end{proposition}

Next we are going to introduce a generalization of the concept $P$-space (Pseudo-discrete space). $P$-spaces are very important  in the contexts of rings of continuous functions which were introduced by  Gillman and Henriksen, see \cite{GH}.

\begin{definition}\label{def 0-4}
Let $X$ be a topological space. The intersection of any family with cardinality less than $\lambda$ of open sets of $X$ is called a $G_\lambda$-set. Obviously,  $G_\delta$-sets are precisely $G_{\aleph_1}$-sets.
\end{definition}

\begin{definition}\label{def 0-5}
The topological space $X$ is said to be a $P_\lambda$-space whenever each $G_\lambda$-set in $X$ is open.
It is manifest that every arbitrary space is a $P_{\aleph_0}$-space and $P_{\aleph_1}$-spaces are precisely $P$-spaces. See Example 1.8 and Example 1.9 in \cite{KNS}
\end{definition}

\section{$\lambda$-perfect maps}

\begin{lemma}\label{lemma-1}
If A is a $\beta$-compact subspace of a space $X$ and $y$ a point of a 
$P_{\lambda} $-space $Y$ such that $\beta\leq\lambda$  then for every open set 
$W\subseteq X\times Y$ containing $A\times\{y\}$ there exist open sets 
$U\subseteq X$ and $V\subseteq Y$ such that $A\times\{y\}\subseteq U\times 
V\subseteq W.$
\end{lemma}

\begin{proof}
For every $x\in A$ the point $(x,y)$ has a neighborhood of the form $U_x\times 
V_x$ contained in $W$. Clearly $A\times\{y\}\subseteq\bigcup_{x\in 
A}U_x\times V_x$. since $A$ is $\beta$-compact; there exists a family 
$\{x_i\}_{i\in I}$ with $|I|<\beta\leq\lambda$ such that 
$A\times\{y\}\subseteq\bigcup_{i\in I}U_{x_i}\times V_{x_i}$. Clearly the sets 
$U=\bigcup_{i\in I}U_{x_i}$ and $V=\bigcap_{i\in I}V_{x_i}$ have the required 
properties.
\end{proof}

Let us recall Kuratowski theorem.
\begin{theorem}\label{theorem-0-1}
For  a topological space $X$ the following conditions are equivalent:\\
(i) The space $X$ is  compact.\\
(ii) For every topological space $Y$ the projection $p:X\times Y\to Y$ is 
closed. \\
(iii) For every $T_4$-space $Y$ the projection $p:X\times Y\to 
Y$ is closed.
\end{theorem}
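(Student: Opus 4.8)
The plan is to establish the cycle of implications (i)$\Rightarrow$(ii)$\Rightarrow$(iii)$\Rightarrow$(i). The implication (ii)$\Rightarrow$(iii) is immediate, since every $T_4$-space is in particular a topological space, so the projections covered by (iii) form a subfamily of those covered by (ii). Thus the genuine content lies in (i)$\Rightarrow$(ii) and in the converse direction (iii)$\Rightarrow$(i).

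For (i)$\Rightarrow$(ii) I would argue in the tube-lemma style of Lemma~\ref{lemma-1}, but with \emph{finite} subcovers supplied by compactness. Let $C\subseteq X\times Y$ be closed and pick $y\in Y\setminus p(C)$. For each $x\in X$ the point $(x,y)$ lies in the open set $(X\times Y)\setminus C$, so there are open sets $U_x\ni x$ and $V_x\ni y$ with $U_x\times V_x\subseteq(X\times Y)\setminus C$. The family $\{U_x\}_{x\in X}$ covers $X$; by compactness finitely many $U_{x_1},\dots,U_{x_n}$ suffice, and then $V=\bigcap_{i=1}^{n}V_{x_i}$ is an open neighbourhood of $y$ with $(X\times V)\cap C=\emptyset$, whence $V\subseteq Y\setminus p(C)$. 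Therefore $p(C)$ is closed and $p$ is a closed map.

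The hard direction is (iii)$\Rightarrow$(i), which I would prove contrapositively by manufacturing a $T_4$-witness to the failure of closedness whenever $X$ is not compact. If $X$ is not compact, choose a family of closed sets with the finite intersection property and empty intersection, and let $\mathcal{F}$ be the filter it generates; then $\mathcal{F}$ has no cluster point, i.e. for each $x\in X$ there are an open $U\ni x$ and $F\in\mathcal{F}$ with $U\cap F=\emptyset$, and in particular $x\notin F$. Form $Y=X\cup\{\infty\}$ with $\infty\notin X$, declaring every point of $X$ isolated and taking the sets $\{\infty\}\cup F$, $F\in\mathcal{F}$, as basic neighbourhoods of $\infty$. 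Let $D=\{(x,x):x\in X\}\subseteq X\times Y$, where the first coordinate carries the topology of $X$ and the second that of $Y$, and set $C=\overline{D}$.

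Two computations then finish the argument. First, a point $(x,\infty)$ lies in $C$ precisely when every basic neighbourhood $U\times(\{\infty\}\cup F)$ meets $D$, which unwinds to the condition that $U\cap F\neq\emptyset$ for all open $U\ni x$ and all $F\in\mathcal{F}$ --- exactly the assertion that $x$ is a cluster point of $\mathcal{F}$. As $\mathcal{F}$ has no cluster point, no such $(x,\infty)$ belongs to $C$, so $p(C)=X$; and $X$ is not closed in $Y$ because its complement $\{\infty\}$ is not open (no member of $\mathcal{F}$ is empty). Hence $p$ fails to be closed. Second, I must verify that $Y$ is $T_4$: it is $T_1$ and Hausdorff, since each $x\in X$ is separated from $\infty$ using some $F\in\mathcal{F}$ with $x\notin F$; and it is normal because any two disjoint closed sets, at most one of which contains $\infty$, are separated either by themselves (when both lie in the discrete part $X$) or by a basic neighbourhood $(X\setminus B)\cup\{\infty\}$ of $\infty$ disjoint from the other closed set $B$. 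The main obstacle is precisely this double check --- that $Y$ is genuinely $T_4$ and that $p(C)=X$; once both are in place, the failure of closedness contradicts (iii), completing the cycle.
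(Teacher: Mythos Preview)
Your argument is correct. The paper does not itself prove this classical statement---it is only recalled as Kuratowski's theorem---but your proof is precisely the $\aleph_0$-case of the paper's proof of the generalisation, Theorem~\ref{theorem-1}: the same tube-lemma step for (i)$\Rightarrow$(ii), the same auxiliary $T_4$-space $Y=X\cup\{\infty\}$ built from a closed family with the finite intersection property for (iii)$\Rightarrow$(i), and the same use of the closure of the diagonal. The only cosmetic difference is in the endgame: you compute $p(C)=X$ directly and observe it is not closed, whereas the paper assumes $p$ is closed, forces $\infty\in p(C)$, and extracts a point of $\bigcap_{s}F_s$; these are the two contrapositive readings of the same calculation.
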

 Next  we  generalize the above theorem.

\begin{theorem}\label{theorem-1}
For an infinite cardinal 
number $\lambda$ and a topological space $X$ the following conditions are equivalent.\\
(i) The space $X$ is $\beta$-compact for $\beta\leq\lambda$. \\
(ii) For every $P_{\lambda} $-space $Y$ the projection $p:X\times Y\to Y$ is 
closed. \\
(iii) For every $T_4$, $P_{\lambda} $-space $Y$ the projection $p:X\times Y\to 
Y$ is closed.
\end{theorem}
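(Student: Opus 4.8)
The plan is to prove the cycle (i)$\Rightarrow$(ii)$\Rightarrow$(iii)$\Rightarrow$(i). The first implication is essentially an application of Lemma~\ref{lemma-1}, the second is a triviality, and the real content lies in the last. For (i)$\Rightarrow$(ii), assume $X$ is $\beta$-compact with $\beta\le\lambda$ and let $Y$ be a $P_\lambda$-space. I would show that $Y\setminus p(F)$ is open for each closed $F\subseteq X\times Y$. Fixing $y\notin p(F)$, the fibre $X\times\{y\}$ is disjoint from $F$, hence contained in the open set $W=(X\times Y)\setminus F$. Applying Lemma~\ref{lemma-1} with $A=X$ (legitimate since $\beta\le\lambda$) yields open $U\subseteq X$ and $V\subseteq Y$ with $X\times\{y\}\subseteq U\times V\subseteq W$; necessarily $U=X$, so $X\times V\subseteq W$ is disjoint from $F$ and $V$ is an open neighbourhood of $y$ missing $p(F)$. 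Thus $p(F)$ is closed. The implication (ii)$\Rightarrow$(iii) is immediate, since a $T_4$ $P_\lambda$-space is in particular a $P_\lambda$-space.

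I would treat (iii)$\Rightarrow$(i) by contraposition, first recording that ``$X$ is $\beta$-compact for some $\beta\le\lambda$'' unwinds to: every open cover of $X$ has a subcover of cardinality $<\lambda$ (this is exactly $d_c(X)\le\lambda$). Suppose this fails, so there is an open cover $\{U_s\}_{s\in S}$ with no subcover of cardinality $<\lambda$. Writing $C_s=X\setminus U_s$, the closed sets $\{C_s\}$ then have the $<\lambda$-intersection property, namely $\bigcap_{s\in E}C_s\ne\emptyset$ for every $E\subseteq S$ with $|E|<\lambda$, while $\bigcap_{s\in S}C_s=\emptyset$ because $\{U_s\}$ covers. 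The goal is to produce a single $T_4$ $P_\lambda$-space $Y$ for which the projection $p$ fails to be closed.

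The construction I propose is the one-point extension $Y=X\cup\{\xi\}$, in which the copy of $X$ carries the \emph{discrete} topology (making the $X$-part automatically $T_4$ and $P_\lambda$) and the neighbourhoods of $\xi$ are the sets $\{\xi\}\cup F$ with $F$ ranging over the $\lambda$-filter $\mathcal F$ generated by $\{C_s\}$, i.e.\ the closure of $\{C_s\}$ under supersets and under intersections of fewer than $\lambda$ of its members. The $<\lambda$-intersection property makes every member of $\mathcal F$ nonempty, so $\mathcal F$ is proper and $\xi$ lies in the closure of $X$ in $Y$; closure of $\mathcal F$ under $<\lambda$-intersections is precisely what forces $Y$ to be a $P_\lambda$-space, and a short check (of two disjoint closed sets at least one misses $\xi$, and so is a clopen subset of the discrete part) shows $Y$ is $T_4$. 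I would flag here the one delicate point: that $\mathcal F$ is closed under $<\lambda$-intersections \emph{into nonempty sets} uses that a union of $<\lambda$ many sets each of size $<\lambda$ again has size $<\lambda$, i.e.\ the regularity of $\lambda$; this holds in the cases of interest ($\aleph_0$, $\aleph_1$) and is the technical hypothesis implicit in the argument.

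The projection argument is then run on the diagonal $\Delta=\{(x,x):x\in X\}\subseteq X\times Y$, with the first factor retaining the original topology of $X$. A direct computation shows that $(a,\xi)\in\overline{\Delta}$ if and only if $a$ is a cluster point of $\mathcal F$, that is, every original neighbourhood of $a$ meets every $F\in\mathcal F$. Now $p(\overline{\Delta})\supseteq p(\Delta)=X$, and $\xi\in\overline{X}$ in $Y$; so if $p$ were closed, $p(\overline{\Delta})$ would be a closed set containing $X$, hence all of $\overline{X}=Y$, and in particular $\xi$. This would give some $a\in X$ with $(a,\xi)\in\overline{\Delta}$, a cluster point of $\mathcal F$; since each $C_s$ is closed and belongs to $\mathcal F$, such an $a$ satisfies $a\in C_s$ for every $s$, contradicting $\bigcap_{s}C_s=\emptyset$. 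Hence $p$ is not closed, and $Y$ witnesses the failure of (iii). The crux, and the step I expect to demand the most care, is exactly this construction of $Y$: securing $T_4$ and $P_\lambda$ simultaneously, and---so as to assume no separation on $X$---arguing through $\overline{\Delta}$ together with the hypothesis that $p$ sends closed sets to closed sets, rather than attempting to prove $\Delta$ itself closed.
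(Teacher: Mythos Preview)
Your proof follows essentially the same route as the paper's: Lemma~\ref{lemma-1} for (i)$\Rightarrow$(ii), and for (iii)$\Rightarrow$(i) the identical one-point extension $Y=X\cup\{\xi\}$ with $X$ discrete and neighbourhoods of $\xi$ given by the $<\lambda$-filter on the closed sets $C_s$, followed by the closure-of-the-diagonal argument to produce a point in $\bigcap_s C_s$. Your explicit flag that regularity of $\lambda$ is what guarantees $Y$ is a $P_\lambda$-space is a point the paper's proof passes over in silence.
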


\begin{proof}
(i)$\Rightarrow$(ii) Let $X$ be a $\beta$-compact space and $F$ be a closed 
subspace of $X\times Y$. Take a point $y\notin p(F)$; as 
$X\times\{y\}\subseteq(X\times Y)\setminus F$, it follows from the lemma \ref{lemma-1} 
that $y$ has a neighborhood $V$ such that $(X\times V)\bigcap F=\emptyset$. 
We then have $p(F)\bigcap V=\emptyset$, which shows that $p(F)$ is a closed 
subset of $Y$. \\
(ii)$\Rightarrow$(iii) It is  obvious\\
(iii)$\Rightarrow$ (i) Suppose that there exists a family 
$\{F_s\}_{s\in S}$ of closed subset of $X$ with the $\lambda$- intersection 
property such that $\bigcap_{s\in S}F_s=\emptyset$. Take a point $y_0\notin X$ 
and on the set $Y=X\bigcup\{y_0\}$ consider the topology $T$ consisting of all 
subsets of $X$ and of all sets of the form$$\{y_0\}\cup(\bigcap_{i\in 
I}F_{s_i})\cup K~~ \mbox{where}~~  s_i\in S~~ ,~~ |I|<\beta~~ 
\mbox{and}~~K\subseteq X. $$ 
 since $\bigcap_{s\in S}F_s=\emptyset$ for every $x\in X$ there exists $s_x\in S$ 
such that $x\notin F_{s_x}$. Notice that the set $$\{y_0\}\cup 
F_{s_x}\cup(X\setminus \{x\})=Y\setminus\{x\}$$ belong to $T$ 
and  since $X\in T$, $\{y_0\}$ is closed, therefor $Y$ is $T_1$-space.\\  If $A$ 
and $B$ are two disjoint closed subsets of $Y$ then at least one of them, say 
$A$, does not contain $y_0$ and so is open; therefor $A$ and $Y\setminus A$ are 
two disjoint open subset of $Y$  that contains $A$ and $B$ respectively. Hence 
$Y$ is normal.

 Take $F=\overline{\{(x,x):x\in X\}}\subseteq X\times Y$. By hypothesis $p(F)$ 
is closed in $Y$. Since $\{y_0\}$ is not open, every open subset of $Y$ that 
contains $y_0$ meets $X$. Hence $\overline X=Y$.\\
 
  If $x\in X$ then $(x,x)\in F$ therefor $x=p(x,x)\in p(F)$. Hence  $X\subseteq 
p(F)$ $$ \Rightarrow y_0\in Y=\overline X\subseteq\overline{p(F)}=p(F).$$ 
Therefor there exists $x_0\in X$ such that $(x_0,y_0)\in F.$ For every 
neighborhood $U\subseteq X$ of $x_0$ and every $s\in S$ the set 
$U\times(\{y_0\}\cup F_s) $ is open in $X\times Y$; hence 
$$[U\times(\{y_0\}\cup F_s)]\cap\{(x,x):x\in X\}\neq\emptyset,$$ i.e., $U\cap 
F_s\neq\emptyset$, which shows that $x_0\in F_s$ for every $s\in S$. This 
implies that $\bigcap_{s\in S}F_s\neq\emptyset$, and we have a contradiction.
 \end{proof}

\begin{definition}
Let $X$ and $Y$ be two topological spaces and $f:X\to Y$ be a continuous 
function.
We say that $f$ is $\lambda$-perfect  whenever $f$ be closed and $\lambda$ be the least infinite cardinal number such that for every $y\in Y$ the compactness degree of $f^{-1}(y)$ is less than or equal to $\lambda$ .
\end{definition}

\begin{theorem}
If $Y$ is a $P_{\lambda} $-space and $X$ is a $\beta$-compact space such that 
$\beta\leq\lambda$, then the projection $p:X\times Y\to Y$ is $\gamma$-perfect 
for $\gamma\leq\beta$.
\end{theorem}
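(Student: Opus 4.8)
The plan is to check the two clauses in the definition of a $\gamma$-perfect map in turn: first that $p$ is a closed map, and then that the compactness degrees of all its fibers are governed by $\beta$. The first clause is, in effect, already available to us, so the real content is a brief fiber computation together with a careful reading of the ``least cardinal'' requirement.

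First I would settle closedness. By hypothesis $X$ is $\beta$-compact with $\beta\leq\lambda$ and $Y$ is a $P_\lambda$-space, which is exactly condition (i) of Theorem \ref{theorem-1} together with the standing assumption on $Y$. The implication (i)$\Rightarrow$(ii) of that theorem then yields immediately that the projection $p:X\times Y\to Y$ is closed, with no additional argument required.

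Next I would analyze the fibers. For each $y\in Y$ we have $p^{-1}(y)=X\times\{y\}$, and the assignment $x\mapsto(x,y)$ is a homeomorphism of $X$ onto $X\times\{y\}$ with its subspace topology. Since the compactness degree is defined purely through open covers, it is a topological invariant, so $d_c(p^{-1}(y))=d_c(X)=\beta$ for every $y\in Y$. Consequently every fiber has compactness degree equal to $\beta$, and the least infinite cardinal that simultaneously bounds all the numbers $d_c(p^{-1}(y))$ is precisely $\beta$. Therefore $p$ meets the definition of a $\gamma$-perfect map with $\gamma=\beta$, whence in particular $\gamma\leq\beta$, as asserted.

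I do not expect a substantive obstacle here; the only delicate point is the ``least infinite cardinal'' clause in the definition of $\gamma$-perfectness. One must note that the fiber degrees all \emph{equal} $\beta$, and are not merely bounded above by it, so that $\beta$ is genuinely the minimal bounding cardinal and the value of $\gamma$ is determined exactly. The homeomorphism $X\cong X\times\{y\}$, which transfers the compactness degree from $X$ to each fiber, is the fact that makes this pinning-down precise.
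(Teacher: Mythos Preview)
Your proof is correct and follows the same overall strategy as the paper: closedness from Theorem~\ref{theorem-1}, then a fiber analysis. Your fiber computation is in fact slightly sharper---you observe directly that $p^{-1}(y)=X\times\{y\}\cong X$, pinning down $\gamma=\beta$ exactly, whereas the paper treats $p^{-1}(y)$ as a closed subset of the $\beta$-compact space $X\times\{y\}$ and appeals to the closed-subset bound to obtain only $d_c(p^{-1}(y))\leq\beta$.
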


\begin{proof}
Theorem \ref{theorem-1} implies that $p$ is closed. For every $y\in Y$ the set 
$p^{-1}(y)$ 
is of the form $A\times\{y\}$ for some $A\subseteq X$ and it is closed in $X\times Y$, 
by the continuity of $p$, So it is closed in $X\times\{y\}$ and since this set is $\beta$-
compact, we infer that $p^{-1}(y)$ is $\alpha$-compact for $\alpha\leq\beta$ by \cite[Proposition 2.6]{NS}. 
Consequently $p$ is $\gamma$-perfect for $\gamma\leq\beta$.
\end{proof}

The following theorem is an extension of Theorem 3.7.2 in \cite{Engelking}
\begin{theorem}
Let $\lambda$ and $\beta$ be two regular cardinal numbers and $f:X\to Y$ be a $\lambda$-perfect mapping, then for every $\beta$-compact 
subspace $Z\subseteq Y$ the inverse image $f^{-1}(Z)$ is  $\mu$-compact for 
$\mu\leq\max\{\lambda , \beta\}$.
\end{theorem}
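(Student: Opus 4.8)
The plan is to transpose the classical argument for Engelking's Theorem 3.7.2 into the cardinal setting, replacing every occurrence of ``finite'' by ``of cardinality $<\lambda$'' (for the fibres) and ``$<\beta$'' (for $Z$), and then to let the regularity hypotheses absorb the resulting cardinal arithmetic. Throughout write $\kappa=\max\{\lambda,\beta\}$, and observe at the outset that $\kappa$ is itself regular, being equal to one of the two regular cardinals $\lambda,\beta$.

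First I would fix an arbitrary open cover of the subspace $f^{-1}(Z)$, written as $\{O_a\cap f^{-1}(Z)\}_{a\in A}$ with each $O_a$ open in $X$, so that $\{O_a\}_{a\in A}$ covers $f^{-1}(Z)$ as a subset of $X$. The goal is to extract from it a subcover of cardinality strictly less than $\kappa$; by the definition of the compactness degree this yields $d_c(f^{-1}(Z))\le\kappa$, that is, $f^{-1}(Z)$ is $\mu$-compact for some $\mu\le\max\{\lambda,\beta\}$.

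Next comes the fibrewise reduction together with the tube step. For each $z\in Z$ the fibre $f^{-1}(z)$ satisfies $d_c(f^{-1}(z))\le\lambda$ by the definition of a $\lambda$-perfect map, so the open cover $\{O_a\cap f^{-1}(z)\}_{a\in A}$ of $f^{-1}(z)$ reduces to a subfamily indexed by some $A_z\subseteq A$ with $|A_z|<\lambda$; put $U_z=\bigcup_{a\in A_z}O_a$, an open subset of $X$ containing $f^{-1}(z)$. Since $f$ is closed, $f(X\setminus U_z)$ is closed in $Y$ and avoids $z$ (because $f^{-1}(z)\subseteq U_z$), so $V_z:=Y\setminus f(X\setminus U_z)$ is an open neighbourhood of $z$ with $f^{-1}(V_z)\subseteq U_z$; this is precisely the closed-map ``tube'' property already exploited in the proof of Theorem \ref{theorem-1}. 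The sets $\{V_z\}_{z\in Z}$ form an open cover of $Z$, and since $Z$ is $\beta$-compact it reduces to $\{V_{z_j}\}_{j\in J}$ with $|J|<\beta$. Tracing inclusions, $f^{-1}(Z)\subseteq\bigcup_{j\in J}f^{-1}(V_{z_j})\subseteq\bigcup_{j\in J}U_{z_j}=\bigcup_{a\in\bigcup_{j}A_{z_j}}O_a$, so $\{O_a\cap f^{-1}(Z):a\in\bigcup_{j\in J}A_{z_j}\}$ is the desired subcover.

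The heart of the argument, and the step I expect to be the main obstacle, is the final cardinality estimate, and it is exactly here that regularity is indispensable. The index set of the subcover has cardinality at most $\sum_{j\in J}|A_{z_j}|$, a sum of fewer than $\beta\le\kappa$ cardinals each of which is $<\lambda\le\kappa$. Because $\kappa$ is regular, the supremum $\sup_{j}|A_{z_j}|$ is again $<\kappa$, whence $\sum_{j\in J}|A_{z_j}|\le |J|\cdot\sup_{j}|A_{z_j}|<\kappa$. Thus every open cover of $f^{-1}(Z)$ admits a subcover of cardinality $<\kappa=\max\{\lambda,\beta\}$, which gives $d_c(f^{-1}(Z))\le\max\{\lambda,\beta\}$ as claimed. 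The only remaining point requiring care is harmless bookkeeping: keeping the distinction between sets open in $X$ and their traces on the subspace $f^{-1}(Z)$ consistent throughout, so that the extracted subfamily is genuinely a subfamily of the cover one started with.
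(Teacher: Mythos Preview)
Your proof is correct and follows essentially the same route as the paper's own argument: cover each fibre $f^{-1}(z)$ by fewer than $\lambda$ members of the given cover, use closedness of $f$ to produce the open ``tubes'' $V_z=Y\setminus f(X\setminus U_z)$ in $Y$, reduce by $\beta$-compactness of $Z$, and finish with the regularity-based cardinality bound on the union of the index sets. Your write-up of the final cardinal estimate via $\kappa=\max\{\lambda,\beta\}$ and $|J|\cdot\sup_j|A_{z_j}|<\kappa$ is in fact more explicit than the paper's, but the underlying idea is identical.
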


\begin{proof}
Let $\{U_s\}_{s\in S}$ be a family of open subsets of $X$ whose union contains 
$f^{-1}(Z)$, $K$ be the family of all subsets of $S$ with cardinality less than 
$\lambda$ and $ U_T=\bigcup_{s\in T}U_s$ for $T\in K$. For each $z\in Z$, $d_{c}(f^{-1}(z))\leq\lambda$ and thus is contained in the set $U_T$ for 
some $T\in K$; it follows that $z\in Y\setminus f(X\setminus U_T)$, and thus:
$$Z\subseteq\bigcup_{T\in K}(Y\setminus f(X\setminus U_{T})).$$ Since $f$ is 
closed; the sets $Y\setminus f(X\setminus U_{T})$ being open. Hence there 
exists 
$I\subseteq K$ with $|I|< \beta$ such that $Z\subseteq\bigcup_{T\in I}
(Y\setminus f(X\setminus U_{T}))$. Hence 
$$f^{-1}(Z)\subseteq\bigcup_{T\in I}f^{-1}(Y\setminus f(X\setminus 
U_T))=\bigcup_{T\in I}(X\setminus f^{-1}f(X\setminus 
U_T))\subseteq$$$$\bigcup_{T\in I}(X\setminus(X\setminus 
U_T))=\bigcup_{T\in I}U_T=\bigcup_{s\in S_0}U_s,$$ where 
$S_0=\bigcup_{T\in I}T.$\\
We notice that $S_0$ is the union of a family with cardinality less than $\beta$
of member of $K$ which has cardinality less than $\lambda$. Since $\lambda$ 
and $\beta$ are regular cardinals; $|S_0|<\max\{\lambda , \beta\}$, i.e., 
$f^{-1}(Z)$ is $\mu$-compact for $\mu\leq\max\{\lambda , \beta\}$.
\end{proof}

The following corollary is now immediate.
\begin{corollary}
If $f:X\to Y$ is an onto $\lambda$-perfect mapping, then for every $\lambda$-
compact subspace $Z\subseteq Y$ the inverse image $f^{-1}(Z)$ is  $\lambda$-
compact too.
\end{corollary}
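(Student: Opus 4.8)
The plan is to obtain this at once as the special case $\beta=\lambda$ of the preceding theorem. Since $Z\subseteq Y$ is $\lambda$-compact, it is in particular $\beta$-compact with $\beta=\lambda$, and $f$ is $\lambda$-perfect by hypothesis. Applying the theorem therefore gives that $f^{-1}(Z)$ is $\mu$-compact for some $\mu\leq\max\{\lambda,\lambda\}=\lambda$; that is, $d_c(f^{-1}(Z))\leq\lambda$, which is precisely the assertion that $f^{-1}(Z)$ is $\lambda$-compact in the sense used here. Concretely, the theorem guarantees that every open cover of $f^{-1}(Z)$ admits a subcover indexed by a set $S_0$ with $|S_0|<\max\{\lambda,\lambda\}=\lambda$, so the compactness degree is bounded by $\lambda$.

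First I would check that the hypotheses of the theorem are actually met. The theorem was stated for regular cardinals $\lambda$ and $\beta$; taking $\beta=\lambda$ means we only need $\lambda$ to be regular, which is the standing assumption carried over from the regularity requirement in the theorem. No further regularity is required. I would also remark that the surjectivity (\emph{onto}) hypothesis appearing in the corollary is not actually used in the inclusion chain proving $f^{-1}(Z)\subseteq\bigcup_{s\in S_0}U_s$; it is harmless and can be kept for symmetry with the classical statement, since it guarantees $f(X)=Y$ and hence that $Z$ sits inside the image.

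Since every step reduces to a substitution into an already-proved result, there is essentially no obstacle here, and the designation \emph{immediate} is justified. The only point demanding care is the reading of the phrase \emph{$\lambda$-compact}: by Definition \ref{def 0-1}, being $\lambda$-compact means the compactness degree equals $\lambda$, whereas the theorem delivers the inequality $d_c(f^{-1}(Z))\leq\lambda$. I would therefore interpret the conclusion, consistently with the phrasing of the theorem, as the bound $d_c(f^{-1}(Z))\leq\lambda$, and note that one cannot in general expect the reverse inequality $d_c(f^{-1}(Z))\geq\lambda$ without additional assumptions on $f$ or on $Z$.
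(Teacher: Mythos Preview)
Your argument establishes only the inequality $d_c(f^{-1}(Z))\leq\lambda$, whereas the corollary asserts that $f^{-1}(Z)$ is $\lambda$-compact in the sense of Definition~\ref{def 0-1}, i.e.\ that $d_c(f^{-1}(Z))=\lambda$ exactly. You yourself flag this discrepancy and then opt to reinterpret the conclusion as the weaker bound; but the paper's proof shows that the equality is genuinely intended and genuinely attainable, and the ingredient you discarded is precisely what closes the gap.

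The \emph{onto} hypothesis is not decorative. Because $f$ is surjective, $Z=f(f^{-1}(Z))$, and Lemma~\ref{lem 0-2} then gives $d_c(Z)\leq d_c(f^{-1}(Z))$. Since $Z$ is $\lambda$-compact, $d_c(Z)=\lambda$, so $\lambda\leq d_c(f^{-1}(Z))$. Combined with the inequality $d_c(f^{-1}(Z))\leq\lambda$ that you obtained from the preceding theorem (with $\beta=\lambda$), this forces $d_c(f^{-1}(Z))=\lambda$. Thus the ``additional assumption on $f$'' you speculate about at the end is already present in the statement, and the reverse inequality follows from it in one line.
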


\begin{proof}
By previous theorem $f^{-1}(Z)$ is $\mu$-compact for $\mu\leq\lambda$; and 
by (  ) $Z=f(f^{-1}(Z))$ is $\gamma$-compact for $\gamma\leq\mu$. But by 
hypothesis $\gamma=\lambda$, so, $\mu=\lambda$.
\end{proof}

\begin{corollary}
 If $g:X\to Z$ be a $\lambda$-perfect and $f:Z\to Y$ be a $\beta$-perfect 
mapping then the composition $fg:X\to Y$ is $\gamma$-perfect for 
$\gamma\leq\max\{\lambda , \beta\}$.
\end{corollary}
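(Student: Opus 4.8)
The plan is to check the two defining conditions of a $\gamma$-perfect map — that $fg$ is closed, and that the compactness degrees of its fibers are suitably bounded — and then to read off the resulting value of $\gamma$.

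First I would verify that $fg$ is closed. Since $g$ is $\lambda$-perfect it is in particular closed, and since $f$ is $\beta$-perfect it too is closed; as the composition of two closed maps is again closed, $fg$ is closed.

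The substance of the argument lies in bounding the fibers. Fix a point $y\in Y$ and observe that $(fg)^{-1}(y)=g^{-1}\bigl(f^{-1}(y)\bigr)$. Because $f$ is $\beta$-perfect, the fiber $f^{-1}(y)\subseteq Z$ satisfies $d_c\bigl(f^{-1}(y)\bigr)\leq\beta$; write $\beta'$ for this degree, so that $f^{-1}(y)$ is a $\beta'$-compact subspace of $Z$ with $\beta'\leq\beta$. Now $g:X\to Z$ is $\lambda$-perfect, so I would apply the preceding inverse-image theorem to the $\beta'$-compact subspace $f^{-1}(y)$ and conclude that $g^{-1}\bigl(f^{-1}(y)\bigr)$ is $\mu$-compact for some $\mu\leq\max\{\lambda,\beta'\}\leq\max\{\lambda,\beta\}$. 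Hence $d_c\bigl((fg)^{-1}(y)\bigr)\leq\max\{\lambda,\beta\}$, and this bound is independent of $y$.

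Finally, since the bound holds uniformly over all $y\in Y$ and $\gamma$ is by definition the least infinite cardinal dominating every fiber degree of $fg$, it follows that $\gamma\leq\max\{\lambda,\beta\}$, which is the assertion. The step I expect to be the main obstacle is the very invocation of the inverse-image theorem, which was established only for \emph{regular} cardinals: to apply it fiberwise one needs $\lambda$ together with each actual fiber degree $\beta'$ to be regular. Securing this — either by assuming $\lambda$ and $\beta$ regular throughout, or by arguing separately that the relevant degrees $\beta'$ are regular — is the delicate bookkeeping on which the proof genuinely turns.
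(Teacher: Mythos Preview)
Your proposal is correct and matches the paper's intended approach: the paper states this as a corollary immediately following the inverse-image theorem (Theorem~2.6) without giving any proof, so the argument you outline---closedness from the composition of closed maps, and the fiber bound from applying Theorem~2.6 to each $f^{-1}(y)$---is exactly what is meant. Your observation about the regularity hypothesis is apt: the paper's Theorem~2.6 is stated only for regular $\lambda$ and $\beta$, yet the corollary omits this assumption, so the same tacit restriction applies here as well.
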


It is well-known that in every regular space we can separate compact sets  from closed sets by the open sets. Next we have an extension of this fact.

\begin{theorem}
If $A$ is a $\beta$-compact subspace of a regular $P_{\lambda}$-space $X$, for $\beta\leq\lambda$, then for every closed set $B$ disjoint from $A$ there exist open sets $U , V\subseteq X$ such that $A\subseteq U$ , $B\subseteq V$ and $U\cap V=\emptyset$.\\ If, moreover, $d_c(B)=\alpha\leq\lambda$ then it suffices to assume that $X$ is a Hausdorff $P_{\lambda}$-space .
\end{theorem}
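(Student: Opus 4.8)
The plan is to prove the two separation statements by adapting the classical argument that separates compact sets from closed sets in regular spaces, but with the cardinality bookkeeping controlled by the $P_\lambda$-hypothesis. Let me sketch how I would proceed.

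\medskip

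\textbf{First part (regular $P_\lambda$-space).} The approach is to separate each point of $A$ from the closed set $B$ first, and then patch the local separations together using $\beta$-compactness of $A$. First I would fix an arbitrary point $x\in A$. Since $x\notin B$ and $X$ is regular, there exist disjoint open sets $U_x\ni x$ and $W_x\supseteq B$. The family $\{U_x\}_{x\in A}$ is then an open cover of $A$, so by $\beta$-compactness I can extract a subfamily $\{U_{x_i}\}_{i\in I}$ with $|I|<\beta\leq\lambda$ that still covers $A$. The key construction is to set
\[
U=\bigcup_{i\in I}U_{x_i}\quad\text{and}\quad V=\bigcap_{i\in I}W_{x_i}.
\]
Then $A\subseteq U$ is clear, and $B\subseteq W_{x_i}$ for every $i$ gives $B\subseteq V$. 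The crucial point is that $V$ is open: it is an intersection of a family of open sets indexed by a set of cardinality $|I|<\lambda$, hence a $G_\lambda$-set, which is open because $X$ is a $P_\lambda$-space. Finally, disjointness $U\cap V=\emptyset$ follows because each $U_{x_i}\cap W_{x_i}=\emptyset$, so $U_{x_i}\cap V\subseteq U_{x_i}\cap W_{x_i}=\emptyset$, and taking the union over $i$ yields $U\cap V=\emptyset$. I expect the main subtlety here to be recognizing that the $P_\lambda$-property is exactly what upgrades the intersection $V$ from a mere $G_\lambda$-set to an open set; without it the argument collapses, which is precisely why regularity alone does not suffice in this generalized setting.

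\medskip

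\textbf{Second part (Hausdorff $P_\lambda$-space, with $d_c(B)=\alpha\leq\lambda$).} Here the plan is to exploit the additional hypothesis on $B$ to bootstrap from the Hausdorff separation of points up to the separation of the two sets, thereby trading the strong regularity assumption for a control on the size of $B$. First I would use Hausdorffness to separate each pair $(x,b)$ with $x\in A$, $b\in B$ by disjoint open sets. Fixing $x\in A$, the sets separating $x$ from the points of $B$ give an open cover of $B$; since $d_c(B)=\alpha\leq\lambda$, I can reduce it to a subcover of cardinality $<\alpha\leq\lambda$, and then intersect the corresponding neighborhoods of $x$ to produce, via the $P_\lambda$-property, an open set $U_x\ni x$ together with an open $W_x\supseteq B$ with $U_x\cap W_x=\emptyset$. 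This is effectively a point-versus-closed-set separation reconstructed from Hausdorffness alone, using $d_c(B)\le\lambda$ in place of regularity. Once these $U_x$ and $W_x$ are in hand, I would repeat the $\beta$-compactness argument of the first part verbatim: cover $A$ by the $U_x$, extract a subfamily of size $<\beta\leq\lambda$, take $U$ to be the union and $V$ the intersection of the corresponding $W_x$, and invoke the $P_\lambda$-property once more to conclude that $V$ is open.

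\medskip

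The hard part will be the second stage of the Hausdorff case, because there the $P_\lambda$-property must be applied \emph{twice}: once to manufacture each individual point-to-$B$ separation from a $<\lambda$-sized intersection of neighborhoods, and once again at the end to turn the $<\beta$-sized intersection over $A$ into an open set. The bookkeeping that both index sets have cardinality strictly below $\lambda$ — hence that each resulting intersection is genuinely a $G_\lambda$-set and therefore open — is where the hypotheses $\alpha\le\lambda$ and $\beta\le\lambda$ are consumed, and it is the place where an incautious argument would fail. Everything else reduces to the standard disjointness computation $U_{x_i}\cap V=\emptyset$ already used in the first part.
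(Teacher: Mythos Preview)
Your proof is correct and follows essentially the same strategy as the paper's. The only cosmetic difference is in the second part: the paper first fixes a point of $B$ and uses the $\beta$-compactness of $A$ to separate $A$ from that single point (in effect reapplying the first part with the closed set $B$ replaced by a singleton), and only afterward invokes $d_c(B)\le\lambda$ to cover $B$, whereas you carry out the symmetric argument, fixing a point of $A$ first and using the compactness degree of $B$ before that of $A$.
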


\begin{proof}
Since $X$ is regular, for every $x\in A$ there exist open sets$~U_x ,~ V_x\subseteq X$ such that $x\in U_x~~,~~B\subseteq V_x$ and \\$ U_x\cap V_x=\emptyset$. Clearly $A\subseteq\bigcup_{x\in A}U_x$, therefor there exists $J\subseteq A$ whit $|J|\leq \beta$ such that $A\subseteq\bigcup_{x\in J}U_x$. Obviously the sets $U=\bigcup_{x\in J}U_x$ and $V=\bigcap_{x\in J}V_x$ have the required properties. Now let we assume that $d_c(B)=\alpha\leq\lambda$ and $X$ is only a Hausdorff $P_{\lambda}$-space. For every $x\in B$ we consider the sets $A$ and $\{x\}$ and clearly similar to the first part we will obtain the disjoint open sets $U_x$ and $V_x$ such that $x\in V_x$ and $A\subseteq U_x$.  Clearly $B\subseteq\bigcup_{x\in B}V_x$
\end{proof}
\begin{lemma}
If $X$ is  a Hausdorff $P_{\lambda} $-space then a $\beta$-perfect mapping 
$f:X\to Y$, whenever $\beta\leq\lambda$, can not be continuously extended over 
any Hausdorff space $Z$ that contains $X$ as a dense proper subspace.
\end{lemma}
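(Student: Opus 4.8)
The plan is to argue by contradiction, transcribing the classical proof for perfect maps (Engelking, Theorem 3.7.17) into the $\lambda$-setting. Suppose, to the contrary, that $Z$ is a Hausdorff space containing $X$ as a dense proper subspace and that $F\colon Z\to Y$ is a continuous map with $F|_X=f$. I would choose a point $z_0\in Z\setminus X$, put $y_0=F(z_0)$, and consider the fibre $A=f^{-1}(y_0)$. By the definition of a $\beta$-perfect map, $d_c(A)\leq\beta\leq\lambda$, so $A$ is a $\beta$-compact subspace of $X$, and $\beta$-compactness being intrinsic to the subspace topology, of $Z$ as well.

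The first step is to separate $A$ from the point $z_0$. Since a singleton has compactness degree $\aleph_0\leq\lambda$, the separation theorem proved just above is of exactly the needed form for the $\beta$-compact set $A$ and the closed set $\{z_0\}$, and yields disjoint open sets $U$ and $W$ with $A\subseteq U$ and $z_0\in W$. Concretely, for each $a\in A$ one uses the Hausdorff property to pick disjoint open neighbourhoods $U_a\ni a$ and $W_a\ni z_0$, extracts from $\{U_a\}_{a\in A}$ a subcover $\{U_{a_i}\}_{i\in I}$ of $A$ with $|I|<\beta$, and sets $U=\bigcup_{i\in I}U_{a_i}$ and $W=\bigcap_{i\in I}W_{a_i}$.

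Granting the separation, the remainder follows the classical pattern. Because $f^{-1}(y_0)=A\subseteq U$, the closed set $X\setminus U$ misses $A$, so $y_0\notin f(X\setminus U)$; as $f$ is closed, $V=Y\setminus f(X\setminus U)$ is an open neighbourhood of $y_0$. Then $W\cap F^{-1}(V)$ is an open neighbourhood of $z_0$ in $Z$, and the density of $X$ produces a point $x\in X\cap W\cap F^{-1}(V)$. From $f(x)=F(x)\in V$ we get $x\notin X\setminus U$, hence $x\in U$; but $x\in W$ and $U\cap W=\emptyset$, a contradiction. Thus no continuous extension $F$ can exist.

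The main obstacle is precisely the separation step. In a merely Hausdorff space one separates a \emph{compact} fibre from a point because the requisite neighbourhood $W$ of $z_0$ is a \emph{finite} intersection of open sets; for a genuinely $\beta$-compact fibre with $\beta>\aleph_0$ the set $W=\bigcap_{i\in I}W_{a_i}$ is an intersection of $|I|<\beta\leq\lambda$ open sets, which is open only by virtue of the $P_\lambda$-property. This is exactly where the hypothesis $\beta\leq\lambda$ together with the $P_\lambda$ structure is indispensable, and the delicate point to check is \emph{which} space must carry it: the openness of $W$ is needed at $z_0\in Z$, so one must verify that the ambient $Z$, not merely the dense subspace $X$, supplies enough of the $P_\lambda$ behaviour for the preceding separation theorem to be invoked there. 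Everything after the separation is a routine translation of the compact case.
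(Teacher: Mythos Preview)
Your strategy coincides with the paper's: argue by contradiction, separate the fibre $f^{-1}(F(z_0))$ from the extra point $z_0$ by disjoint open sets in $Z$, and then exploit the closedness of $f$ together with the continuity of $F$. The paper first reduces without loss of generality to $Z=X\cup\{z_0\}$ and, after obtaining disjoint open $U\ni z_0$ and $V\supseteq f^{-1}(F(z_0))$ in $Z$, shows that $\mathrm{cl}_Z(X\setminus V)\subseteq F^{-1}(f(X\setminus V))\subseteq X$ and $\mathrm{cl}_Z V\subseteq X$, whence $\mathrm{cl}_Z X\subseteq X$, contradicting density. Your endgame (using density to place a point of $X$ inside both separating sets) is the equally standard variant from Engelking; the substantive content is the same.

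The obstacle you isolate is genuine, and the paper does not address it. The paper simply invokes the second part of its preceding separation theorem \emph{inside $Z$}, but that theorem requires the ambient space to be a Hausdorff $P_\lambda$-space, whereas the lemma assumes only that $Z$ is Hausdorff and that $X$ carries the $P_\lambda$ structure. The reduction to $Z=X\cup\{z_0\}$ does not help: adjoining a single point to a $P_\lambda$-space need not produce a $P_\lambda$-space, and the neighbourhood of $z_0$ that the argument needs is a $(<\lambda)$-fold intersection of $Z$-open sets, whose openness at $z_0$ is precisely what is in question. So the gap you flag is real and is shared by the paper's own proof; as written, the argument appears to require the additional hypothesis that $Z$ (or at least the one-point extension $X\cup\{z_0\}$) be a $P_\lambda$-space.
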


\begin{proof}
 Without loss of generality we can suppose that $Z=X\cup\{x\}$. Now if $F:Z\to 
Y$ is a continuous extension of $f$ then $d_c(f^{-1}(F(x)))\leq\beta$ and clearly $f^{-1}(F(x))$ dose not contain $x$. Therefor by the second part of 
theorem 0.9. there exist open sets $U,V\subseteq Z$ such that $x\in U,f^{-1}
(F(x))\subseteq V$ , and $U\cap V=\emptyset$. Clearly the sets $(Z\setminus 
V)\cap X=X\setminus V$ ,   $f(X\setminus V)$ and $F^{-1}(f(X\setminus V))$ 
are closed.  Thus we have $$cl_z(X\setminus V)\subseteq F^{-1}(f(X\setminus 
V))=f^{-1}(f(X\setminus V))\subseteq X.$$ Since $x\notin cl_zV$, we have 
$cl_zV\subseteq X$ therefor $cl_zX=cl_z(X\setminus V)\cup cl_zV\subseteq X$. 
Hence $X$ does not dense in $Z$.
\end{proof}

\begin{proposition}
 If the composition $gf$ of continuous mappings $f:X\to Y$ and $g:Y\to Z$ is 
$\lambda$-perfect then the mappings $g_f=g|f(X)$ and $f$ are $\beta$-perfect 
and $\mu$-perfect respectively for $\mu , \beta\leq\lambda$.
\end{proposition}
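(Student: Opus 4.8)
The plan is to treat the two assertions separately, handling $g_f=g|_{f(X)}$ first, where perfectness is almost formal, and then $f$, where the closedness is the genuinely delicate point. For $g_f$ I would start with the fibres: for $z\in Z$ one has $g_f^{-1}(z)=f((gf)^{-1}(z))$, the continuous image under $f$ of the fibre $(gf)^{-1}(z)$; since $gf$ is $\lambda$-perfect this fibre has compactness degree at most $\lambda$, so Lemma \ref{lem 0-2} gives $d_c(g_f^{-1}(z))\le\lambda$. For closedness, let $F$ be closed in $f(X)$; writing $F=C\cap f(X)$ with $C$ closed in $Y$ one checks that $f^{-1}(F)=f^{-1}(C)$ is closed in $X$ and that $f(f^{-1}(F))=F$, whence $g_f(F)=(gf)(f^{-1}(F))$ is closed in $Z$ because $gf$ is closed. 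Thus $g_f$ is $\beta$-perfect with $\beta\le\lambda$.

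Now I turn to $f$. The fibres are again easy: since $f^{-1}(y)\subseteq(gf)^{-1}(g(y))$ and $f^{-1}(y)$ is closed in $X$ (points of $Y$ being closed), Proposition \ref{pro 0-3} gives $d_c(f^{-1}(y))\le d_c((gf)^{-1}(g(y)))\le\lambda$. The main obstacle is proving that $f$ is closed, and this is where I expect to spend all the effort. I would use the criterion that $f$ is closed precisely when, for every closed $F\subseteq X$ and every $y_0\notin f(F)$, one has $y_0\notin\overline{f(F)}$. Fix such $F$ and $y_0$ and put $z_0=g(y_0)$ and $K=(gf)^{-1}(z_0)$. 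If $z_0\notin(gf)(F)$ the conclusion is immediate and costs nothing extra: $(gf)(F)$ is closed, so $g^{-1}(Z\setminus(gf)(F))$ is an open neighbourhood of $y_0$ disjoint from $f(F)$.

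The difficult case is $z_0\in(gf)(F)$, i.e.\ $F\cap K\neq\emptyset$, since then parts of the big fibre $K$ lie in $F$ and cannot simply be avoided. Here I would examine $f(F\cap K)$, which is $\beta$-compact with $\beta\le\lambda$ (Proposition \ref{pro 0-3} bounds $d_c(F\cap K)$ as $F\cap K$ is closed in $K$, and Lemma \ref{lem 0-2} then bounds its $f$-image), and which omits $y_0$ because $y_0\notin f(F)$. Assuming, as elsewhere in the paper, that $Y$ is a Hausdorff $P_\lambda$-space, the separation theorem proved above applies to the $\beta$-compact set $A=f(F\cap K)$ and the closed set $B=\{y_0\}$, whose compactness degree is $\aleph_0\le\lambda$, and yields disjoint open sets $W\supseteq f(F\cap K)$ and $V_0\ni y_0$. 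The set $O=(X\setminus F)\cup f^{-1}(W)$ is then open and contains all of $K$, so the closedness of $gf$ (in its neighbourhood, or tube, form) provides an open $V_Z\ni z_0$ with $(gf)^{-1}(V_Z)\subseteq O$.

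I would finish by checking that $V=g^{-1}(V_Z)\cap V_0$ is an open neighbourhood of $y_0$ disjoint from $f(F)$: any $x\in F$ with $f(x)\in V$ satisfies $g(f(x))\in V_Z$, hence $x\in(gf)^{-1}(V_Z)\subseteq O$, and since $x\in F$ this forces $x\in f^{-1}(W)$, i.e.\ $f(x)\in W$, contradicting $f(x)\in V_0$ with $W\cap V_0=\emptyset$. Thus $y_0\notin\overline{f(F)}$, so $f$ is closed, and combined with the fibre estimate this makes $f$ a $\mu$-perfect map for some $\mu\le\lambda$. The only real subtlety is this last case, and it is exactly where the Hausdorff $P_\lambda$ hypothesis on $Y$ is indispensable: taking $g$ constant already shows that $f$ can fail to be closed when $Y$ is not Hausdorff, so some separation assumption of this kind must be read into the statement.
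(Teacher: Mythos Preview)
Your argument is correct (under the same tacit assumption that $Y$ is a Hausdorff $P_\lambda$-space, which the paper also needs), but your treatment of the closedness of $f$ follows a different line from the paper's. The paper does not build a neighbourhood of $y_0$ directly; instead, for closed $F\subseteq X$ it observes that $h=(gf)|_F$ is again $\alpha$-perfect for some $\alpha\le\lambda$, applies the already-proved first part to conclude that $g|_{f(F)}:f(F)\to Z$ is $\theta$-perfect, and then invokes the preceding non-extendibility lemma: since $g|_{f(F)}$ extends continuously to $g|_{\overline{f(F)}}$, that lemma forces $f(F)=\overline{f(F)}$. Your route instead unpacks the content of that lemma in situ, combining the separation theorem (applied in $Y$ to the $\beta$-compact set $f(F\cap K)$ and the point $y_0$) with the tube form of closedness of $gf$. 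What the paper's approach buys is economy and reuse: once the lemma is in hand, closedness of $f$ drops out in two lines without any case split. What your approach buys is transparency: one sees exactly where the $P_\lambda$ and Hausdorff hypotheses on $Y$ enter, and the argument does not route through the auxiliary ``no dense proper extension'' statement. Your handling of $g_f$ is essentially the paper's, with the reference to Engelking's Proposition~2.1.4 (which says the first factor of a closed composite is closed onto its image) spelled out explicitly.
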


\begin{proof}
Since for every point $z\in Z$ we have $d_{c}((gf)^{-1}(z))\leq\lambda$; it 
follows from proposition () that the compactness degree of the set $g_f^{-1}(z)=f(X)\cap g^{-1}
(z)=f[(gf)^{-1}(z)]$ is less than or equal to $\lambda$.  The fact that 
$g_f$ is a closed mapping follows from [proposition 2.1.3  Eng] and thus the 
mapping $g_f$ is $\mu$-perfect for $\mu\leq\lambda$.\\
 We note that for every point $y\in Y$, $f^{-1}(y)$ is closed subset of $(gf)^{-1}(g(y))$ then $d_c(f^{-1}(y))\leq d_c(gf)^{-1}(g(y))\leq\lambda$. thus to conclude the proof it suffices to show that $f$ is closed. For this end we consider the arbitrary closed closed set $F\subseteq X$ and the mapping $h=(gf)|F: F\to Z.$ For every $z\in Z, h^{-1}(z)=(gf)^{-1}(z)\cap F$ is closed subset of $(gf)^{-1}(z)$ then $d_c(h^{-1}(z))\leq d_c((gf)^{-1}(z))\leq\lambda.$ this means that $h$ is $\alpha$-perfect for $\alpha\leq\lambda.$ so that, by the first part of our proof, the restriction $g|f(F)$ is $\theta$-perfect for $\theta\leq\alpha$. Clearly $g|f(F)$ can be continuously extended over $\overline{f(F)}.$ (note that the restriction of $g$ to $f(F)$ is the continuous extension of $g|f(F)$ over $\overline {f(F)}.$) It follows from the previous lemma that $f(F)=\overline {f(F)}$ and thus $f$ is a closed mapping.
\end{proof}

\end{document}